\title{\LARGE \bf
A probabilistic max-plus numerical method for 
solving stochastic control problems}
\author{Marianne Akian$^{1}$ and Eric Fodjo$^{2}$
\thanks{$^{1}$Marianne Akian is with 
INRIA and CMAP, \'Ecole polytechnique CNRS. Address:
CMAP, \'Ecole Polytechnique,
Route de Saclay,
91128 Palaiseau Cedex, France.
        {\tt\small marianne.akian@inria.fr}}%
\thanks{$^{2}$Eric Fodjo is with
I-Fihn Consulting and 
INRIA and CMAP, \'Ecole polytechnique CNRS. Address:
CMAP, \'Ecole Polytechnique,
Route de Saclay,
91128 Palaiseau Cedex, France.
        {\tt\small eric.fodjo@polytechnique.edu}}%
}
\newcommand{\figperso}[2]{\begin{figure}[htb]\begin{center} #1\end{center}
#2\end{figure}}
\newcommand{\R}{\mathbb R}
\newcommand{\Sy}{\mathbb S}
\newcommand{\E}{\mathbb E}
\newcommand{\M}{\mathcal M}
\newcommand{\T}{\mathcal T}
\newcommand{\U}{\mathcal U}
\newcommand{\F}{\mathcal F}
\newcommand{\Q}{\mathcal Q}
\newcommand{\G}{\mathcal G}
\newcommand{\W}{\mathcal W}
\newcommand{\D}{\mathcal D}
\newcommand{\Po}{\mathcal P}
\newcommand{\Li}{\mathcal L}
\newcommand{\Ha}{\mathcal H}
\newcommand{\tr}{\mathop{\mathrm{tr}}}
\newcommand{\tp}{^{\mathsf T}\,}
\newcommand{\card}[1]{\# #1}
\newcommand{\wT}{\widetilde{T}}
\newcommand{\wG}{\widetilde{G}}
\newcommand{\A}{\mathfrak A}
\newcommand{\NXW}{N_{\mathrm{rg}}}
\newcommand{\NX}{N_x}
\newcommand{\NW}{N_w}
\newcommand{\Nzero}{N_{\mathrm{in}}}
\newcommand{\Nm}{N_{\mathrm{m}}}
\newcommand{\eps}{\epsilon}
\newtheorem{theorem}{Theorem}
\newtheorem{lemma}[theorem]{Lemma}
\newtheorem{algorithm}{Algorithm}
\begin{document}

\maketitle
\thispagestyle{empty}
\pagestyle{empty}

\begin{abstract}

We consider fully nonlinear Hamilton-Jacobi-Bellman equations
associated to diffusion control problems involving 
a finite set-valued (or switching) control and possibly 
a continuum-valued control.
We construct a  lower complexity probabilistic numerical algorithm
by combining the idempotent expansion properties obtained by 
McEneaney, Kaise and Han (2011) for solving such problems
with a numerical probabilistic method such as the one 
proposed by Fahim, Touzi and Warin (2011)
for solving some fully nonlinear parabolic partial differential
equations.
Numerical tests on a small example of pricing and hedging an option
are presented.
\end{abstract}

\section{Introduction}\label{sec-int}

We consider a finite horizon diffusion control problem on $\R^d$ involving 
at the same time a ``discrete'' control taking its values in a finite set $\M$, 
and a ``continuum'' control taking its values in some subset $\U$ 
of a finite dimensional space $\R^p$ (for instance a 
convex set with nonempty interior), which we next describe.

Let $T$ be the horizon. 
The state $\xi_s\in\R^d$ at time $s\in [0,T]$
satisfies the stochastic differential equation
\begin{equation}\label{defxi}
d \xi_s = f^{\mu_s} (\xi_s, u_s) ds + \sigma^{\mu_s} (\xi_s, u_s) d W_s
\enspace , \end{equation}
where $(W_s)_{s\geq 0}$ is a $d$-dimensional
Brownian motion on a filtered probability space
$(\Omega,\F,(\F_s)_{0\leq s\leq T},P)$.
The control processes $\mu:=(\mu_s)_{0\leq s\leq T}$ and $u:=(u_s)_{0\leq s\leq T}$
take their values in the
sets $\M$ and $\U$ respectively and they are admissible if
they are  progressively measurable with respect to
the filtration $(\F_s)_{0\leq s\leq T}$.
We assume that, for all $m\in\M$,
the maps $f^m: \R^d\times \U\to \R^d$ and
$\sigma^m: \R^d\times \U\to \R^{d\times d}$ are continuous
and satisfy properties implying the existence of the process 
$(\xi_s)_{0\leq s\leq T}$ for any admissible control processes $\mu$ and $u$.

Given an initial time $t\in [0,T]$, the control problem consists in maximizing
the following payoff:
\begin{align*}
&J(t, x, \mu, u) := \E \left[ \int_t^T 
e^{ - \int_t^s \delta^{\mu_{\tau}} (\xi_{\tau}, u_{\tau}) d{\tau} }
\ell^{\mu_s} (\xi_s, u_s) ds \right.\\
&\left. \quad + e^{ - \int_t^T \delta^{\mu_{\tau}} (\xi_{\tau}, u_{\tau}) d{\tau} }
\psi(\xi_T)  \mid \xi_t = x \right] \enspace ,
\end{align*}
where, for all $m\in \M$, $\ell^m: \R^d\times \U\to \R$,
$\delta^m: \R^d\times \U\to \R_+$ (the set of positive reals),
and $\psi:\R^d\to\R$ are  given continuous maps.
We then define the value function of the problem as the optimal payoff:
$$v(t, x) = \sup_{\mu, u} J(t,x,\mu, u)\enspace ,$$
where the maximization holds over all admissible control processes
$\mu$ and $u$.

Let $\Sy_d$ denotes the set of symmetric $d\times d$ matrices.
The Hamiltonian $\Ha:\R^d\times \R \times \R^d\times \Sy_d\to \R$
of the above control problem is defined as:
\[ \Ha(x, r,p,\Gamma):=\max_{m\in \M} \Ha^m(x, r,p,\Gamma)\enspace,\]
with 
\begin{align*}
 \Ha^m(x, r,p,\Gamma):=&\max_{u\in \U} \Big\{
\frac{1}{2} \tr\left(\sigma^m(x,u) \sigma^m(x,u)\tp \Gamma
\right)  \\
& +f^m(x,u)\cdot p-\delta^m(x,u)r +\ell^m(x,u)\Big\}\enspace.
\end{align*}
Under suitable assumptions, the value function 
$v:[0,T]\times \R^d\to \R$ is the unique 
(continuous) viscosity solution of
the following Hamilton-Jacobi-Bellman equation
\begin{align}
&-\frac{\partial v}{\partial t} 
-\Ha(x, v(t,x), Dv(t,x), D^2v(t,x))=0, \label{HJB}\\
&\; \hspace{10em} x\in \R^d,\; t\in [0,T), \nonumber\\
& v(T,x)=\psi(x), \quad x\in \R^d,\nonumber
\end{align}
satisfying also some growth condition at infinity (in space).

In~\cite{touzi2011}, Fahim, Touzi and Warin proposed a probabilistic numerical 
method to solve such fully nonlinear partial 
differential equations~\eqref{HJB}, inspired by their backward stochastic
differential equation interpretation 
given by Cheridito, Soner, Touzi and Victoir
in~\cite{cheridito2007}.
However this method only works when 
the diffusion matrices $\sigma^m(x,u) \sigma^m(x,u)\tp$ are 
at the same time bounded from below (with respect to the Loewner order)
by a symmetrix positive definite matrix $a$ and 
bounded from above by $(1+2/d) a$.
Such a constraint can be restrictive, in particular it may not hold
even when the matrices  $\sigma^m(x,u)$ do not depend on $x$ and $u$ but take
different values for $m\in \M$. Also some regularity conditions
may be needed for $\Ha$, which are not fulfilled 
when $\M$ is a finite set.

McEneaney, Kaise and Han
proposed in~\cite{mceneaney2010,mceneaney2011}
an idempotent numerical method which works at least when 
the hamiltonian $\Ha^m$ corresponds to linear quadratic control problems.
This method is based on the distributivity of the (usual) addition operation
over the supremum (or infimum) operation, and on a property
of invariance of the set of quadratic forms.
It computes in a backward manner the value function $v(t,\cdot)$
at time $t$ as a supremum of quadratic forms.
However, as $t$ decreases, the number of quadratic forms generated 
by the method increases exponentially (and even become infinite
if the Brownian is not discretized in space) and some pruning 
is necessary to reduce the complexity of the algorithm.

Here, we combine the  two above methods to construct a
new algorithm.
The method is using in particular the simulation of a small number of
uncontrolled stochastic processes as in~\cite{touzi2011}.
We show that
even without pruning, the complexity of the algorithm
is bounded polynomially in the number of discretization time steps
and in the size of the sample of the uncontrolled stochastic processes.
Numerical tests of our algorithm 
 on an example of pricing and hedging an option in dimension 2
considered in~\cite{kharroubi2014} are presented.

\section{The algorithm of  Fahim, Touzi and Warin}\label{sec-touzi}

Let $h$ be a time discretization step such that $T/h$ is an integer.
We denote by $\T_h=\{0,h,2h,\ldots, T-h\}$ the set of discretization times
of $[0,T)$.

For each $m\in\M$, we shall assume that we can apply 
the algorithm of~\cite{touzi2011} to the equation:
\begin{align}
&-\frac{\partial v}{\partial t} 
-\Ha^m(x, v(t,x), Dv(t,x), D^2v(t,x))=0, \label{m-eq}\\
&\; \hspace{10em} x\in \R^d,\; t\in [0,T).\nonumber
\end{align}
For this purpose, we decompose $\Ha^m$ as the sum of the (linear) generator 
$\Li^m$ of a diffusion (with no control) and of
a nonlinear elliptic Hamiltonian $\G^m$, that is
$\Ha^m=\Li^m+\G^m$ with 
\begin{align*}
 \Li^m(x, r,p,\Gamma):=&
\frac{1}{2} \tr\left(a^m(x)  \Gamma \right) +\underline{f}^m(x)\cdot p \enspace,
\end{align*}
$a^m(x)=\underline{\sigma}^m(x) \underline{\sigma}^m(x)\tp$ 
and $\G^m$ such that 
$\partial_{\Gamma} {\G^m}$ is positive semidefinite, for all $x\in\R^d,\;
r\in\R, p\in \R^d, \Gamma\in \Sy_d$.
We also assume that 
$\tr(a^m(x)^{-1} \partial_{\Gamma} {\G^m})\leq 1$.

The time discretization of \eqref{m-eq} proposed in~\cite{touzi2011} 
can be written in the following form:
\[ v^h(t,x)=T_{t,h}^m(v^h(t+h,\cdot))(x),\quad t\in\T_h\enspace,\]
where, under some conditions, $T_{t,h}^m$ is a monotone operator
over the set of Lipschitz continuous functions from $\R^d$ to $\R$:
\begin{equation}\label{monotone}
 \phi\leq \psi\implies T_{t,h}^m(\phi)\leq T_{t,h}^m(\psi)\enspace .\end{equation}
Moreover,  the operator $T_{t,h}^m$ is constructed by using a
probabilistic scheme.
Denote by $\hat{X}^{m}$
the Euler discretization of the diffusion with generator $\Li^m$:
\begin{eqnarray}
\lefteqn{ \hat{X}^{m}(t+h)=}\nonumber \\
&& \label{xhat} 
 \hat{X}^{m}(t)+ \underline{f}^m(\hat{X}^{m}(t))h+
\underline{\sigma}^m(\hat{X}^{m}(t)) (W_{t+h} -W_t)\enspace .\end{eqnarray}
Then, 
\begin{eqnarray}
\lefteqn{  T_{t,h}^m(\phi)(x)= \D_{m,t,h}^{0}(\phi)(x)}\nonumber\\
&&+ h \G^m(x, \D_{m,t,h}^{0}(\phi)(x),\D_{m,t,h}^{1}(\phi)(x),\D_{m,t,h}^{2}(\phi)(x))
\label{def-th-m}
\end{eqnarray}
with, for $i=0,1,2$, 
$ \D_{m,t,h}^i(\phi)$ being the approximation of the $i$th derivative of $\phi$
obtained as follows:
\[ \D_{m,t,h}^i(\phi)(x)=
\E(\phi(\hat{X}^{m}(t+h))
\Po^i_{m,t,x,h}(W_{t+h}-W_t)\mid \hat{X}^{m}(t)=x)
\enspace ,\]
where, for all $m,t,x,h,i$, $\Po^i_{m,t,x,h}$ is
a polynomial of degree $i$
 with values in an appropriate finite dimensional space,
and in particular $\Po^0_{m,t,x,h}\equiv 1$.
Although the operator $T_{t,h}^m$ 
does not depend on $t$, since both the law of $W_{t+h}-W_t$ 
and the Hamiltonian $\Ha^m$ do not depend on $t$,
we keep the index $t$ since it will become
important when applying a regression approximation (see below).

In~\cite{touzi2011}, the convergence of such a time discretization is
proved under the above assumptions and some other technical assumptions.
Note that these conditions include the boundedness of the  coefficients 
of the Hamiltonians  $\Li^m$ and $\Ha^m$, and the boundedness of 
the value function of the corresponding control problem.
However, a change of variable on the value and on the state 
allows one to obtain the same type of result for
unbounded coefficients and value function satisfying some suitable
growth conditions at infinity.
Such a change of variables may induce a deformation on the 
discretizations~\eqref{xhat} and~\eqref{def-th-m} and so on the algorithm, but we shall
not discuss this here.
A greater difficulty is that the  above assumptions do not allow 
in general to handle directly
the case where $\Ha^m$ is replaced by $\Ha$.
However, in the case of $\Ha$ as above one can simply consider the
following scheme:
\begin{equation}
 v^h(t,x)=\max_{m\in\M} \{T_{t,h}^m(v^h(t+h,\cdot))(x)\},\quad t\in\T_h\enspace.
\label{disc1}
\end{equation}
The difference with the usual scheme of~\cite{touzi2011} is that
one needs to construct several operators $T_{t,h}^m$ and so several
processes  $\hat{X}^{m}$, one for each $m\in\M$.
The solution $v^h$ of this time discretization will converge to the
value function of our problem, that is the solution
of the Hamilton-Jacobi-Bellman equation with hamiltonian $\Ha$,
 as soon as the convergence 
is proved for the time discretization of the equations
with hamiltonians $\Ha^m$.

Although the above scheme can be compared to a standard numerical
approximation if one develops the expression
of each $T_{t,h}^m(\phi)(x)$, with $m\in\M$, 
one may compute $v^h$ given by~\eqref{disc1} as in~\cite{touzi2011},
that is using a regression estimator.
One just simulates the process $\hat{X}^{m}$ and
do at each time $t\in\T_h$ a regression estimation to find the value
of $\D_{m,t,h}^i(v^h(t+h,\cdot))$ at the points $\hat{X}^{m}(t)$ by using
the values of  $\hat{X}^{m}(t+h)$ and $W_{t+h}-W_t$.

Although this variation of the method of~\cite{touzi2011}, based 
on~\eqref{disc1},
is appealing and may work in practice,
several difficulties remain.
First, theoretically, the sample size to obtain
the convergence of the estimator  is at least
in the order of $1/h^{d/2}$~\cite{bouch-tou}.
Hence, it is exponential in the dimension of the system
showing the persistence of  the curse of dimensionality,
although in some practical examples, a much smaller sample size
may be sufficient.
Next, one possible regression estimation is to 
approximate the conditional expectation of a random map 
by projecting it orthogonally into a finite dimensional
linear space of functions. Then, to obtain a good estimation,
the dimension of this space need to be exponential in the dimension $d$.
In the sequel, we shall rather use a
small dimensional regression space and use a distributivity property as in
the work of  McEneaney, Kaise and Han~\cite{mceneaney2010,mceneaney2011}
to find a good approximation of $v_h$ living in the max-plus linear space
of finite suprema of quadratic forms.

\section{The algorithm of  McEneaney, Kaise and Han}\label{sec-bill}
In~\cite{mceneaney2010,mceneaney2011},  the following time
discretization is used.
Denote by $\hat{\xi}^{m,u}$
the Euler discretization of the process $\xi$ defined in the introduction,
when the controls $m$ and $u$ are fixed:
\begin{align*}
\hat{\xi}^{m,u}(t+h)=&\hat{\xi}^{m,u}(t)+ f^m(\hat{\xi}^{m,u}(t),u)h+\\
& \qquad\sigma^m(\hat{\xi}^{m,u}(t),u) (W_{t+h} -W_t)\enspace .\end{align*}
Then, a time discretization of the solution of~\eqref{HJB}
is given by:
\begin{equation}\label{disc2}
 v^h(t,x)=T_{t,h}(v^h(t+h,\cdot))(x),\quad t\in\T_h\enspace,\end{equation}
where
\begin{align}
 T_{t,h}(\phi)(x)=&\sup_{m\in\M, u\in \U} 
\Big\{ h \ell^{m} (x,u)
+e^{ - h \delta^{m} (x,u)} \nonumber \\
& \qquad  \qquad \E \big[ \phi(\hat{\xi}^{m,u}(t+h))
 \mid \hat{\xi}^{m,u}(t)=x \big] \Big\}\enspace ,
\label{defdiscop2}\end{align}
Under appropriate assumptions,
this scheme converges to the solution of~\eqref{HJB}
(see~\cite{mceneaney2011} for $\delta^m=0$).
Note that the processes $\hat{X}^{m}$  of the 
previous section are not related to the processes $\hat{\xi}^{m,u}$
and so the above discretization is different from the one
of the previous section.

Assume that the final reward $\psi$ of the control problem
can be written as the supremum of a finite number 
of concave quadratic forms.
Denote $\Q_d=\Sy_d^-\times \R^d\times \R$, where
$\Sy_d^-$ is the set of negative definite symmetric $d\times d$
matrices, and let 
\begin{equation}\label{paramquad}
 q(x,z):= \frac{1}{2} x\tp Q x + b\cdot x +c, \quad
\text{with}\;\; z=(Q,b,c)\in \Q_d\enspace, \end{equation}
be the quadratic form with parameter $z$ applied to the vector $x\in\R^d$.
Then for $g_T=q$, we have
\[ v^h(T,x)=\psi(x)=\sup_{z\in Z_T} g_T(x,z)\]
where $Z_T$ is a finite subset of $\Q_d$.
Then, in~\cite{mceneaney2011}, the following property is deduced from
a max-plus distributivity property, in the more general 
case where the Brownian motion does not have the same
dimension as the state space.

\begin{theorem}[\protect{\cite[Theorem 5.1]{mceneaney2011}}]
Assume that $\delta^m=0$, that $\sigma^m$ does not depend on $x$ and $u$,
that $f^m$ is affine with respect to $(x,u)$, that
$\ell^m$ is concave quadratic with respect to $(x,u)$, and that $\psi$ is 
the supremum of a finite number of concave quadratic forms.
Consider the time discretization of~\eqref{disc2} with~\eqref{defdiscop2}.
Then, for all $t\in\T_h$, there exists a set $Z_t$ 
and a  map $g_t:\R^d\times Z_t\to \R$ such that
for all $z\in Z_t$, $g_t(\cdot, z)$ is a concave quadratic form and
\[ v^h(t,x)=\sup_{z\in Z_t} g_t(x,z)\enspace .\]
Moreover, the sets $Z_t$ satisfy
\[ Z_t= \M\times 
\{\bar{z}_{t+h}:\W\to Z_{t+h}\mid \text{Borel measurable}\}\enspace ,\]
where $\W=\R^d$ is the space of values of the Brownian process.
\end{theorem}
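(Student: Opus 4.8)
The plan is to prove the statement by backward induction on $t\in\T_h$, using the recursion $v^h(t,x)=T_{t,h}(v^h(t+h,\cdot))(x)$ with the operator~\eqref{defdiscop2} specialized to $\delta^m=0$, so that
\[
T_{t,h}(\phi)(x)=\sup_{m\in\M,\,u\in\U}\Big\{h\,\ell^m(x,u)+\E\big[\phi(\hat{\xi}^{m,u}(t+h))\mid\hat{\xi}^{m,u}(t)=x\big]\Big\}\enspace.
\]
The base case $t=T$ is exactly the hypothesis on $\psi$: we have $v^h(T,x)=\psi(x)=\sup_{z\in Z_T}g_T(x,z)$ with $Z_T$ finite and each $g_T(\cdot,z)$ concave quadratic. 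For the inductive step I would assume $v^h(t+h,y)=\sup_{z\in Z_{t+h}}g_{t+h}(y,z)$ with every $g_{t+h}(\cdot,z)$ concave quadratic, and produce the same structure at time $t$ together with the claimed description of $Z_t$.

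The central step is the max-plus distributivity. Substituting the inductive hypothesis inside the expectation gives $\E[\sup_{z\in Z_{t+h}}g_{t+h}(\hat{\xi}^{m,u}(t+h),z)]$, and the key identity is the interchange of the expectation with the supremum over $z$ via a measurable selection:
\[
\E\big[\sup_{z\in Z_{t+h}}g_{t+h}(\hat{\xi}^{m,u}(t+h),z)\big]=\sup_{\bar z}\;\E\big[g_{t+h}(\hat{\xi}^{m,u}(t+h),\bar z(W_{t+h}-W_t))\big]\enspace,
\]
where $\bar z$ ranges over the Borel measurable maps $\W\to Z_{t+h}$. This is precisely the statement that the integral (an additive, hence max-plus linear, operation) distributes over the supremum, realized through a pointwise-optimal, measurably chosen $z$ depending on the Brownian increment. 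After this interchange I would commute the three suprema, writing $v^h(t,x)=\sup_{m}\sup_{\bar z}\sup_{u}\{\cdots\}$, so that the new parameter is the pair $(m,\bar z)$, which accounts exactly for the announced $Z_t=\M\times\{\bar z_{t+h}:\W\to Z_{t+h}\mid\text{Borel measurable}\}$.

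It then remains to show that for each fixed $(m,\bar z)$ the inner object $\sup_{u\in\U}\{h\,\ell^m(x,u)+\E[g_{t+h}(\hat{\xi}^{m,u}(t+h),\bar z(W_{t+h}-W_t))]\}$ is a concave quadratic form in $x$, which defines $g_t(x,(m,\bar z))$. Here I would use the structural assumptions: since $\sigma^m$ is constant and $f^m$ is affine, the Euler step $\hat{\xi}^{m,u}(t+h)$ is affine in $(x,u)$ with a deterministic Jacobian $M$ plus the noise term $\sigma^m(W_{t+h}-W_t)$; composing the concave quadratic $g_{t+h}(\cdot,z)$ with this affine map yields a quadratic in $(x,u)$ whose Hessian is $M\tp Q(z)M$. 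Taking the expectation over the increment leaves a deterministic, negative semidefinite Hessian $M\tp\E[Q(\bar z(\cdot))]M$, and adding the concave quadratic $h\,\ell^m(x,u)$ preserves joint concavity in $(x,u)$. Finally, maximizing a jointly concave quadratic over $u$ (on $\U=\R^p$, or at an interior optimum) returns a concave quadratic in $x$ by partial maximization, i.e.\ a Schur-complement computation; strict concavity of $\ell^m$ in $u$ guarantees the quadratic block in $u$ is invertible and the resulting form lands in $\Q_d$. This closes the induction.

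The main obstacle will be the rigorous justification of the interchange of $\E$ and $\sup_z$: one must verify that the map $(w,z)\mapsto g_{t+h}(\hat{\xi}^{m,u}(t+h),z)$ is a normal integrand (jointly measurable and upper semicontinuous in $z$), invoke a measurable selection theorem to obtain a Borel $\bar z$ attaining or approximating the pointwise supremum, and control integrability so that the expectations are finite and the exchange is valid. A secondary difficulty, compounding this, is that for $t+h<T$ the space $Z_{t+h}$ is itself a space of measurable functions rather than a subset of $\Q_d$, so measurability of the $Z_{t+h}$-valued selections $\bar z$ must be set up consistently through the recursion; and one must track that the Schur-complement and averaging steps preserve genuine negative definiteness rather than mere semidefiniteness.
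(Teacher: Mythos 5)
Your plan matches the paper's approach: a backward induction whose two ingredients are (i) the distributivity of the (monotone, additively subhomogeneous) expectation operator over the supremum via an $\epsilon$-optimal measurable selection $\bar z$ of the parameter as a function of the Brownian increment — exactly the content of the paper's Theorem~\ref{main-theo}, which builds the selection by hand from a countable cover of $\W$ by balls rather than invoking a normal-integrand selection theorem — and (ii) the closure of concave quadratic forms under affine composition, expectation, and partial maximization over $u$, which is the paper's Lemma~\ref{lem-imagequad}. The technical caveats you flag (integrability/boundedness for the interchange, and the measurable structure on the function spaces $Z_{t+h}$) are precisely the points the paper handles by stating the distributivity for bounded continuous envelopes over an abstract measurable parameter space, so your proposal is essentially the paper's proof.
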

Note that the sets $Z_t$ are infinite as soon as $t<T$.
However, if the Brownian process is discretized in space,
the set $\W$ can be replaced by a finite subset, 
and the sets $Z_t$ become finite.
Nevertheless, their cardinality increases exponentially as $t$ decreases:
$\card{Z_t}=\card{\M} \times( \card{Z_{t+h}})^p$ where $p$ is the cardinality of
the discretization of $\W$. Then, 
McEneaney, Kaise and Han proposed in~\cite{mceneaney2011} a pruning method
to reduce at each time step $t\in\T_h$ the cardinality of $Z_t$.

\section{Combining max-plus approximations and probabilistic schemes}

Here, we assume that the assumptions of the two previous sections
hold, and consider a time discretization scheme similar to the one
of Section~\ref{sec-touzi}.
The application of the operator  $T_{t,h}^m$ of~\eqref{def-th-m} to a function $\phi : \R^d \to \R, x\mapsto \phi(x) $ can be written, for each 
$x\in \R^d$, as
\begin{subequations}\label{def-T-G}
\begin{align}
T_{t,h}^m(\phi)(x)&=G_{t,x,h}^m(\tilde{\phi}^{m}_{t,x,h})\enspace , 
\end{align}
where $G_{t,x,h}^m$ is an operator from $\D$ to $\R$,
where $\D$ is the set of measurable functions from $\W:=\R^d$ to $\R$ 
with at most exponential growth rate, and 
\begin{align}
\tilde{\phi}^{m}_{t,x,h} &: \W \to \R, W \mapsto \phi(x + \underline{f}^m(x)h+
\underline{\sigma}^m(x) W) \enspace .
\label{def-tilde}
\end{align}
\end{subequations}
Indeed in the case of~\eqref{def-th-m}, $G_{t,x,h}^m$ is given by
\begin{eqnarray}
\lefteqn{G_{t,x,h}^m(\tilde{\phi})
= D_{m,t,x,h}^{0}(\tilde{\phi})}\nonumber\\
&&+ h \G^m(x, D_{m,t,x,h}^{0}(\tilde{\phi}),D_{m,t,x,h}^{1}(\tilde{\phi}),D_{m,t,x,h}^{2}(\tilde{\phi}))\label{defG}\end{eqnarray}
with, for $i=0,1,2$, 
\[ D_{m,t,x,h}^i(\tilde{\phi})=
\E(\tilde{\phi}(W_{t+h}-W_t) \Po^i_{m,t,x,h}(W_{t+h}-W_t))
\enspace .\]

Let us say that an operator $G:\D\to\R$ is monotone 
if it satisfies:
\begin{subequations}\label{monotone2}
\begin{align}
\phi, \psi \in \D,\; \phi \leq \psi \;\text{a.e.},\; \implies\; G(\phi)\leq G(\psi) \enspace,\end{align}
and that it is additively $\alpha$-subhomogeneous, for some
constant $\alpha>0$, if it satisfies:
\begin{align}
\forall K>0 \text{ and } \phi \in \D, \; G(\phi +K) \leq G(\phi) +\alpha K \enspace ,
\end{align}\end{subequations}
where $\phi+K$ is the map $W\in \W\mapsto \phi(W)+K$.
The above properties imply that the restriction of $G$ to
the set of bounded measurable functions is Lipschitz continuous with constant
$\alpha$.
Using the same kind of proof as in~\cite{touzi2011}
for~\eqref{monotone}, one can obtain the stronger property
that all the operators $G_{t,x,h}^m$ belong to the
class of monotone additively $\alpha_h$-subhomogeneous operators
from $\D$ to $\R$, for some constant $\alpha_h=1+Ch$ with $C\geq 0$.
This implies that $T_{t,h}^m$ sends the set of bounded measurable functions
to itself and is Lipschitz continuous with constant $\alpha_h$ on it.


Since we shall consider approximations of the value function
by suprema of concave quadratic forms, we shall need to apply the 
operators $T_{t,h}^m$ to quadratic forms, which are unbounded maps.
Therefore, in order 
to apply the above properties, we shall rather 
assume that the operators $T_{t,h}^m$ can also be written as 
\begin{equation}\label{def-wT}
 T_{t,h}^m(\phi)=\wT_{t,h}^m(\phi/\chi)\chi\end{equation}
where $\chi:\R^d\to \R$ is given by $\chi(x)=1+\|x\|^2$ 
with $\|\cdot\|$ beeing the Euclidian norm,
and where $\wT_{t,h}^m$ are also of the above form~\eqref{def-T-G}
with some operators $\wG_{t,x,h}^m$ instead of $G_{t,x,h}^m$ belonging again to the
class of monotone additively $\tilde{\alpha}_h$-subhomogeneous operators
from $\D$ to $\R$, for some constant $\tilde{\alpha}_h=1+\tilde{C}h$ 
with $\tilde{C}\geq 0$.
Under these conditions, We obtain 
a result similar to~\cite[Theorem 5.1]{mceneaney2011}.
\begin{theorem}\label{th-dist}
Consider the control problem of Section~\ref{sec-int}.
Assume that $\delta^m$ and $\sigma^m$ are constant, that $f^m$ 
is affine with respect to $(x,u)$, that
$\ell^m$ is concave quadratic with respect to $(x,u)$, and that $\psi$
is the supremum of a finite number of concave quadratic forms.
Consider the time discretization~\eqref{disc1}
with $\underline{\sigma}^m=\epsilon\sigma^m$,
$0< \epsilon\leq 1$ and $\underline{f}^m$ affine.
Assume that the operators $T_{t,h}^m$ satisfy~\eqref{def-wT}
for some operators $\wT_{t,h}^m$ of the form~\eqref{def-T-G},
with some operators $\wG_{t,x,h}^m$ instead of $G_{t,x,h}^m$ belonging to the
class of monotone additively $\tilde{\alpha}_h$-subhomogeneous operators
from $\D$ to $\R$, for some constant $\tilde{\alpha}_h=1+\tilde{C}h$ 
with $\tilde{C}\geq 0$, see~\eqref{monotone2}.
Assume also that the discretized value function $v_h$ of~\eqref{disc1}
is such that $v_h/\chi$ is bounded and Lipschitz continuous
with respect to $x$.
Then, for all $t\in\T_h$, there exists a set $Z_t$ 
and a  map $g_t:\R^d\times Z_t\to \R$ such that
for all $z\in Z_t$, $g_t(\cdot, z)$ is a concave quadratic form and
\[ v^h(t,x)=\sup_{z\in Z_t} g_t(x,z)\enspace .\]
Moreover, the sets $Z_t$ satisfy
\[ Z_t= \M\times 
\{\bar{z}_{t+h}:\W\to Z_{t+h}\mid \text{Borel measurable}\}\enspace .\]
\end{theorem}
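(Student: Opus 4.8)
The plan is to argue by backward induction on $t\in\T_h$, establishing simultaneously the representation $v^h(t,x)=\sup_{z\in Z_t}g_t(x,z)$ with each $g_t(\cdot,z)$ a concave quadratic form and the recursive structure of $Z_t$. The base case $t=T$ is exactly the hypothesis that $\psi$ is a finite supremum of concave quadratic forms, so one may take $Z_T\subset\Q_d$ finite. For the inductive step I would assume the representation at time $t+h$ and propagate it through the scheme \eqref{disc1}, that is through $v^h(t,x)=\max_{m\in\M}T_{t,h}^m(v^h(t+h,\cdot))(x)$.

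The first and conceptually central step is a distributivity property for each $T_{t,h}^m$. Writing $T_{t,h}^m$ in the form \eqref{def-T-G}--\eqref{def-wT}, its action on $\phi$ factors through the substitution $W\mapsto \phi(x+\underline f^m(x)h+\underline\sigma^m W)$, which is affine in $x$ because $\underline f^m$ is affine and $\underline\sigma^m=\eps\sigma^m$ is constant. For $\phi=\sup_{z\in Z_{t+h}}g_{t+h}(\cdot,z)$, the substituted function is the pointwise supremum over $z$ of the concave quadratics $W\mapsto g_{t+h}(x+\underline f^m(x)h+\underline\sigma^m W,z)$. The key observation is that the pointwise maximizing selection $\bar z^\star(W)\in\arg\max_{z}g_{t+h}(x+\underline f^m(x)h+\underline\sigma^m W,z)$ reconstructs this supremum exactly, while every Borel selection $\bar z:\W\to Z_{t+h}$ produces a function lying pointwise below it. Since $\wG_{t,x,h}^m$, hence $T_{t,h}^m$, is monotone by \eqref{monotone2}, applying it to the family indexed by $\bar z$ and taking the supremum over selections recovers $T_{t,h}^m(\phi)(x)$, the supremum being attained at $\bar z^\star$. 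Taking the outer $\max_{m\in\M}$ then yields $v^h(t,x)=\sup_{(m,\bar z)\in Z_t}g_t(x,(m,\bar z))$ with $Z_t=\M\times\{\bar z:\W\to Z_{t+h}\mid\text{Borel}\}$, where $g_t(x,(m,\bar z))$ is the value of $T_{t,h}^m$ on the selected quadratic family. The $\chi$-rescaling \eqref{def-wT} is what legitimizes this on the unbounded quadratics: one applies the monotone additively $\tilde\alpha_h$-subhomogeneous operators $\wG_{t,x,h}^m$ to the bounded functions $\phi/\chi$, and since $\chi>0$ the rescaling commutes with the supremum over $z$.

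The second step is to verify that for each fixed $(m,\bar z)$ the map $x\mapsto g_t(x,(m,\bar z))$ is again concave quadratic, the analog of the invariance of the set of quadratic forms used in \cite{mceneaney2011}. For each fixed $W$ the substituted map $x\mapsto g_{t+h}(x+\underline f^m(x)h+\underline\sigma^m W,\bar z(W))$ is concave quadratic, being a concave quadratic composed with an affine map in $x$. One then has to show that forming $D_{m,t,x,h}^0,D_{m,t,x,h}^1,D_{m,t,x,h}^2$ and substituting them into $D^0+h\,\G^m(x,\cdot,\cdot,\cdot)$ returns a concave quadratic in $x$. Under the present assumptions $\G^m$ is affine in its value and Hessian arguments, while $\delta^m,\sigma^m$ constant and $f^m$ affine reduce its only nonlinearity to the partial maximization $\max_{u\in\U}\{f^m(x,u)\cdot p+\ell^m(x,u)\}$; since $\ell^m$ is jointly concave quadratic in $(x,u)$ and $f^m$ is affine, this returns a concave quadratic in $(x,p)$, and composing with the first-order estimator preserves concave-quadraticity.

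The step I expect to be the main obstacle is exactly this invariance verification, for two intertwined reasons. First, the estimators $D_{m,t,x,h}^i$ carry the signed polynomial weights $\Po^i_{m,t,x,h}$, so the claim that $D^1$ stays affine and $D^2$ stays constant in $x$ is not automatic once $\bar z(W)$ introduces genuine $W$-dependence beyond the affine state update; one must invoke that on quadratics these estimators act exactly as the corresponding derivatives (the Gaussian integration-by-parts, or Hermite, exactness underlying \cite{touzi2011}) and check that no spurious higher-degree-in-$x$ terms survive. Second, one must ensure the maximization over $u$ preserves concavity in $x$, i.e. that the Schur complement of the joint Hessian of $f^m(x,u)\cdot p+\ell^m(x,u)$ is negative semidefinite, which is precisely where joint concavity of $\ell^m$ is essential. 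By comparison the induction and the selection-based distributivity are routine, modulo the measurable-selection theorem needed to produce $\bar z^\star$ as a Borel map, which requires joint measurability of $(W,z)\mapsto g_{t+h}$ and a suitable measurable structure on $Z_{t+h}$, and the standing assumption that $v_h/\chi$ is bounded and Lipschitz, which keeps all suprema finite and the operators contractive on the relevant class.
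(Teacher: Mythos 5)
Your overall architecture coincides with the paper's: backward induction from $Z_T$, a distributivity property letting the monotone operators pass through the supremum via measurable selections (the paper's Theorem~\ref{main-theo}), and an invariance-of-concave-quadratics lemma (the paper's Lemma~\ref{lem-imagequad}, which is stated there without proof and which your second step sketches plausibly). The genuine gap is in how you establish the distributivity. You reduce it to monotonicity plus the existence of an exact pointwise maximizer $\bar z^\star(W)\in\arg\max_{z}g_{t+h}(\cdot,z)$ realized as a Borel map. This fails beyond the first backward step: for $t+h<T$ the index set $Z_{t+h}=\M\times\{\bar z:\W\to Z_{t+2h}\ \text{Borel}\}$ is uncountable and carries no canonical topology or standard Borel structure, the supremum over it need not be attained pointwise, and the measurable-selection theorem you invoke has no obvious footing. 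Monotonicity alone then only yields the easy inequality $\sup_{\bar z}G(\bar\phi^{\bar z})\leq G(v)$; the reverse inequality is the substance of the claim and your argument does not supply it.

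The paper's mechanism is different and is exactly what your hypotheses are for: given $\eps>0$, pick for each $W$ an $\eps$-optimal index $z^W$, use continuity of $v$ and of $\phi(\cdot,z^W)$ (guaranteed, after the $\chi$-rescaling of~\eqref{def-wT}, by the assumption that $v_h/\chi$ is bounded and Lipschitz) to find a ball around $W$ on which $z^W$ remains $3\eps$-optimal, extract a countable subcover of $\W$, and glue the resulting piecewise-constant selection into a Borel map $\bar z$ with $v-3\eps\leq\bar\phi^{\bar z}\leq v$. Monotonicity and the additive $\tilde\alpha_h$-subhomogeneity of~\eqref{monotone2} then give $G(v)\geq G(\bar\phi^{\bar z})\geq G(v)-3\tilde\alpha_h\eps$, and letting $\eps\to0$ closes the argument. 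The fact that your write-up never actually uses the subhomogeneity hypothesis in the distributivity step is the symptom of the missing idea: it is precisely the tool that converts an $\eps$-optimal (rather than exact) selection into the needed lower bound, and it is what makes exact attainment and measurable-selection machinery unnecessary.
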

This result uses the following properties, the
second one being a generalization of~\cite[Theorem 3.1]{mceneaney2011}.
\begin{lemma}\label{lem-imagequad}
Let $\tilde{z}$ be a measurable function from $\W$ to 
$\Q_d$. Let us consider the notations and assumptions
of Theorem~\ref{th-dist} and let $G_{t,x,h}^m$ be related to $T_{t,h}^m$ 
by~\eqref{defG}.
Let $\tilde{q}^{m, \tilde{z}}_{t,x,h}$ be the map
$\W\to\R,\; W \mapsto q(x + \underline{f}^m(x)h+
\underline{\sigma}^m(x) W, \tilde{z}(W) )$, with $q$ as in~\eqref{paramquad}.
Then, the function $x \mapsto G_{t,x,h}^m (\tilde{q}^{m, \tilde{z}}_{t,x,h})$ is a concave quadratic form, that is it can be written as $q(x,Z)$ for some $Z\in \Q_d$.
\end{lemma}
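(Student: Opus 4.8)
The plan is to verify the claim by direct computation, exploiting the affine dependence of the Euler step on the state. First I would use that, under the assumptions of Theorem~\ref{th-dist}, $\underline{\sigma}^m$ is constant and $\underline{f}^m$ is affine, so the argument $y(x,W)=x+\underline{f}^m(x)h+\underline{\sigma}^m W$ of $q$ in the definition of $\tilde{q}^{m,\tilde z}_{t,x,h}$ is an affine function of $x$ (and of $W$): write $y(x,W)=B^m x+\beta^m+\underline{\sigma}^m W$ with $B^m$ a fixed invertible matrix for $h$ small and $\beta^m$ a fixed vector. Substituting into \eqref{paramquad} and collecting powers of $x$, the map $W\mapsto \tilde{q}^{m,\tilde z}_{t,x,h}(W)$ is, for each $W$, a quadratic polynomial in $x$ whose coefficients are measurable functions of $W$ (built from $Q(W),b(W),c(W)$, where $\tilde z(W)=(Q(W),b(W),c(W))$). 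Since each $\Po^i_{m,t,x,h}$ does not depend on $x$, linearity of the expectation shows that every $D^i_{m,t,x,h}(\tilde{q}^{m,\tilde z}_{t,x,h})$ is again a polynomial in $x$; in particular $D^0$ is quadratic in $x$ with leading matrix $(B^m)\tp\,\E[Q(W_{t+h}-W_t)]B^m$. Integrability of the relevant quadratic-times-polynomial integrands against the Gaussian increment is what guarantees these expectations are finite.

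The second step is to feed $D^0,D^1,D^2$ into $G^m_{t,x,h}$ as in \eqref{defG} and to show that $x\mapsto D^0+h\,\G^m(x,D^0,D^1,D^2)$ is still a polynomial of degree at most two. Here I would make the decomposition $\Ha^m=\Li^m+\G^m$ explicit for the linear-quadratic data: with $\delta^m,\sigma^m$ constant one gets $\G^m(x,r,p,\Gamma)=\tfrac12(1-\eps^2)\tr(\sigma^m(\sigma^m)\tp\Gamma)-\delta^m r -\underline{f}^m(x)\cdot p + H^m(x,p)$, where $H^m(x,p):=\max_{u\in\U}\{f^m(x,u)\cdot p+\ell^m(x,u)\}$ is, by completion of squares (Riccati), a concave quadratic form in $(x,p)$. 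The term in $\Gamma$ is linear and the term $-\delta^m r$ is linear, so these contribute degree $\le 2$ once $D^2,D^0$ are inserted; the delicate contributions are $-\underline{f}^m(x)\cdot D^1$ and $H^m(x,D^1)$, since $H^m$ is quadratic in its second argument.

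I expect this last point to be the main obstacle. For a fixed parameter $z$ (that is, $\tilde z$ constant) the Fahim--Touzi--Warin weights are built so that $D^1$ reproduces the gradient and $D^2$ the Hessian of a quadratic, hence $D^1$ is affine and $D^2$ constant in $x$, and then $H^m(x,D^1)$ and $-\underline{f}^m(x)\cdot D^1$ are automatically of degree $\le 2$; but when $\tilde z$ genuinely depends on $W$, the quadratic-in-$x$ part of $\tilde{q}^{m,\tilde z}_{t,x,h}$ carries the $W$-dependent matrix $(B^m)\tp Q(W)B^m$ through the weights $\Po^1,\Po^2$, so a priori $D^1$ and $D^2$ acquire their own quadratic-in-$x$ parts and $H^m(x,D^1)$ threatens to be of degree four. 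The crux of the proof is therefore to show, using the explicit moment identities for $\Po^1,\Po^2$ against the Gaussian increment (equivalently, Gaussian integration by parts) together with the completion-of-squares structure of $H^m$, that all contributions of degree three and four cancel, leaving a genuine quadratic. I would isolate the super-quadratic coefficients and check that the Riccati term and the weight moments combine so that they vanish identically; this computation, rather than any conceptual difficulty, is where the real work lies.

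Finally, granting quadraticity, I would read off $Z=(Q_Z,b_Z,c_Z)$ from the degree-two, degree-one and degree-zero coefficients and verify $Q_Z\in\Sy_d^-$, so that $Z\in\Q_d$ and the value equals $q(x,Z)$. Concavity propagates because $q(\cdot,\tilde z(W))$ is concave, i.e. $Q(W)$ is negative definite, and $B^m$ is invertible, so $(B^m)\tp\E[Q(W_{t+h}-W_t)]B^m\prec 0$; moreover $H^m$ is concave in $(x,p)$ and $\ell^m$ is concave, so the maximized term keeps the $x$-Hessian negative, while the linear-in-$\Gamma$ and linear-in-$r$ terms do not spoil the sign. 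This yields a concave quadratic form, as claimed.
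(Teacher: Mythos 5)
The paper states this lemma without proof (the proof environment that follows it belongs to Theorem~\ref{main-theo}), so there is nothing to compare your argument against; judged on its own, your outline has a genuine gap at precisely the point you yourself flag as the crux. Your setup is correct: with $\underline{\sigma}^m$ constant and $\underline{f}^m$ affine, the argument $y(x,W)=B^mx+\beta^m+\underline{\sigma}^m W$ is affine in $x$, the unweighted expectation $D^0_{m,t,x,h}(\tilde{q}^{m,\tilde z}_{t,x,h})$ is a concave quadratic in $x$, and, because $\tilde z$ depends on $W$, the weighted expectations $D^1$ and $D^2$ acquire genuine quadratic-in-$x$ parts (the degree-two part of the $k$-th component of $D^1$ is $\frac12 x\tp (B^m)\tp\,\E\bigl[Q(W)\,\Po^1_k(W)\bigr]B^m x$, and $\E[Q(W)W_k]$ has no reason to vanish for a merely measurable $\tilde z$). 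But you then assert, without computing anything, that all degree-three and degree-four contributions to $h\,\G^m(x,D^0,D^1,D^2)$ cancel. There is no visible mechanism for such a cancellation: writing $f^m(x,u)=F^mx+C^mu+g^m$ and letting $R^m\succ 0$ be the $u$-Hessian of $-\ell^m$, the only degree-four contribution comes from the $p$-quadratic part $\frac12 p\tp C^m(R^m)^{-1}(C^m)\tp p$ of $H^m(x,p)=\max_{u\in\U}\{f^m(x,u)\cdot p+\ell^m(x,u)\}$ evaluated at $p=D^1(x)$, which yields the single \emph{nonnegative} quartic $\frac h2\, D^{1,(2)}(x)\tp C^m(R^m)^{-1}(C^m)\tp D^{1,(2)}(x)$, where $D^{1,(2)}$ denotes the degree-two part of $D^1$; the $\Gamma$- and $r$-terms of $\G^m$ are linear and contribute only degree two, so nothing is available to cancel it. ``Cancellation'' can therefore only mean that this term vanishes identically, i.e. $(C^m)\tp D^{1,(2)}(x)\equiv 0$, which is an additional structural hypothesis (satisfied, e.g., when the continuum control does not enter the drift, as in the paper's numerical example), not something that follows from Gaussian integration by parts — integration by parts is exactly what is unavailable here, since $\tilde z$ is only measurable. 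A complete proof must either establish this vanishing under the stated hypotheses or restrict them.

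Two further problems. Your concavity argument rests on the claim that $H^m$ is concave in $(x,p)$; as a supremum over $u$ of functions affine in $p$, $H^m$ is \emph{convex} in $p$ (its $p$-Hessian is $C^m(R^m)^{-1}(C^m)\tp\succeq 0$), so negativity of the $x$-Hessian of $x\mapsto G^m_{t,x,h}(\tilde{q}^{m,\tilde z}_{t,x,h})$ does not follow from the sign bookkeeping you describe and needs a separate argument. And for the quantities $D^i_{m,t,x,h}(\tilde{q}^{m,\tilde z}_{t,x,h})$ to be defined at all, one needs $\tilde{q}^{m,\tilde z}_{t,x,h}\in\D$, which constrains the growth of $W\mapsto\tilde z(W)$; integrability against the Gaussian increment is not automatic for an arbitrary measurable $\tilde z$ and should be stated as a hypothesis or verified for the $\tilde z$ actually produced by the algorithm.
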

\begin{theorem}\label{main-theo}
Let $\W=\R^d$ and $G$ be a monotone additively $\alpha$-subhomogeneous operator
from $\D$ to $\R$, for some constant $\alpha>0$, see~\eqref{monotone2}.
Let $(Z,\A)$ be a mesurable space, and let $\W$ be endowed with its
Borel $\sigma$-algebra. 
Let $\phi:\W \times Z\to\R$ be mesurable map such that for all $z \in Z$, $\phi(\cdot,z)$ is bounded and continuous. 
Let $v:\W \to\R$ be such that
$v(W)=\sup_{z\in Z} \phi(W,z)$. Assume that v is continuous and 
bounded. 
Then,
\[ G(v)= \sup_{\bar{z}\in\overline{Z}}
G(\bar{\phi}^{\bar{z}})\] 
where $\bar{\phi}^{\bar{z}} :  \W\to \R,\; W \mapsto \phi(W,\bar{z}(W))$,
and 
\begin{align*}
\overline{Z}=&\{\bar{z}\, : \W \to Z,\; \text{measurable}\\
&\quad \text{and such that}\; \bar{\phi}^{\bar{z}} \; \text{is bounded} \}.
\end{align*}
\end{theorem}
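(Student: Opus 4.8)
The plan is to establish the two inequalities separately: the bound $\sup_{\bar{z} \in \overline{Z}} G(\bar{\phi}^{\bar{z}}) \le G(v)$ will follow immediately from monotonicity, while the reverse inequality will require constructing $\eps$-optimal measurable selections and then invoking additive subhomogeneity.

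For the easy direction, I would note that for any $\bar{z} \in \overline{Z}$ and every $W \in \W$ one has $\bar{\phi}^{\bar{z}}(W) = \phi(W, \bar{z}(W)) \le \sup_{z \in Z} \phi(W, z) = v(W)$, so that $\bar{\phi}^{\bar{z}} \le v$ pointwise, both maps lying in $\D$ since they are bounded and measurable. Monotonicity of $G$ then gives $G(\bar{\phi}^{\bar{z}}) \le G(v)$, and taking the supremum over $\overline{Z}$ yields the bound.

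For the reverse inequality I would fix $\eps > 0$ and construct a measurable map $\bar{z}_\eps : \W \to Z$ satisfying $\phi(W, \bar{z}_\eps(W)) > v(W) - \eps$ for all $W$. The key is to exploit continuity: since each $\phi(\cdot, z)$ and $v$ are continuous, the sets $U_z := \{W \in \W : \phi(W, z) > v(W) - \eps\}$ are open, and the definition of $v$ as a pointwise supremum forces $\bigcup_{z \in Z} U_z = \W$. As $\W = \R^d$ is second countable, hence Lindel\"of, a countable subfamily $(z_n)_{n \ge 1}$ already covers $\W$. The disjoint Borel sets $V_n := U_{z_n} \setminus \bigcup_{k < n} U_{z_k}$ then partition $\W$, and setting $\bar{z}_\eps \equiv z_n$ on $V_n$ defines a measurable selection --- preimages of sets in $\A$ are countable unions of the $V_n$ --- for which $\bar{\phi}^{\bar{z}_\eps} > v - \eps$ everywhere; since $v - \eps < \bar{\phi}^{\bar{z}_\eps} \le v$ with $v$ bounded, $\bar{z}_\eps$ indeed lies in $\overline{Z}$.

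It then remains to combine this selection with subhomogeneity. From $\bar{\phi}^{\bar{z}_\eps} \ge v - \eps$ and monotonicity, $G(\bar{\phi}^{\bar{z}_\eps}) \ge G(v - \eps)$; applying additive $\alpha$-subhomogeneity to the function $v - \eps$ with constant $K = \eps$ gives $G(v) \le G(v - \eps) + \alpha\eps$, so that $G(\bar{\phi}^{\bar{z}_\eps}) \ge G(v) - \alpha\eps$. Taking the supremum over $\overline{Z}$ and letting $\eps \to 0$ closes the gap. The main obstacle is precisely the measurable selection step: since $(Z, \A)$ carries no topology, classical Kuratowski--Ryll-Nardzewski selection theorems do not apply, so all the required regularity must be routed through the continuity of the $\phi(\cdot, z)$ and of $v$ on the separable domain $\W$ --- which is exactly what the Lindel\"of covering argument delivers.
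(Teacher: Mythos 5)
Your proposal is correct and follows essentially the same route as the paper: an $\eps$-optimal selection made measurable by extracting a countable subcover of an open cover coming from the continuity of $v$ and the $\phi(\cdot,z)$ (you invoke Lindel\"of on the sets $U_z$; the paper covers $\W$ by balls $B(W,\delta^W)$ using $\sigma$-compactness, losing a harmless factor of $3$ in the $\eps$), followed by disjointification and the monotonicity/subhomogeneity estimate $G(\bar{\phi}^{\bar{z}_\eps})\ge G(v)-\alpha\eps$. No gaps.
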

\begin{proof}
Since $v$ is bounded and continuous, it belongs to $\D$, so that $G(v)$
is well defined. Similarly,
by definition, for all $\bar{z}\in \overline{Z}$, 
$\bar{\phi}^{\bar{z}} $ is measurable and bounded, so it belongs to $\D$, so that $G(\bar{\phi}^{\bar{z}})$ is well defined.

Let $\epsilon > 0$.
By definition of $v$, for all $W\in \W$, there exists $z^W \in Z$ such that $\phi(W,z^W) \ge v(W)-\epsilon$.
Then, since $W'\mapsto \phi(W',z_W)$ and $W'\mapsto v(W')$ are 
continuous maps $\W\to \R$, there exists $\delta^W>0$ such that
for all $W' \in B(W, \delta^W)$ (the open ball centered at $W$ with radius
$\delta^W$), $|\phi(W',z^W)-\phi(W,z^W)| \le \epsilon$ and $|v(W')-v(W)| \le \epsilon$. Then,  for $W'\in B(W, \delta^W)$, we have 
\[\phi(W',z^W) \ge \phi(W,z^W)-\epsilon \ge v(W)-2\epsilon \ge v(W')-3\epsilon\enspace .\] 
As $\W$ is the countable union of compact metric spaces,
there exists a sequence $(W_i)_{i\geq 0}$ of $\W$ such that
$\W= \cup_{i\geq 0} B(W_i, \delta^{W_i})$.
Let us denote, for all $i\geq 0$, $\W_i=B(W_i, \delta^{W_i})$ and $\W'_i=\W_i\setminus(\cup_{j<i} \W_j)$. 
Define the function $\bar{z}$ such that,
for all $i\geq 0$, $\bar{z}(W')=z^{W_i}$, for $W' \in \W'_i$.
Since $(\W'_i)_{i\geq 0}$ is a countable partition of $\W$ 
composed of Borel sets, the map $\bar{z}$ is well defined on $\W$ and
measurable.
Moreover, by the above properties and the definition of $v$, we have 
\[ v(W)\geq \bar{\phi}^{\bar{z}}(W) =\phi(W,\bar{z}(W)) \ge v(W)-3\epsilon,
\quad \forall W \in \W\enspace .\]
Since $v$ is bounded, this implies that $\bar{\phi}^{\bar{z}}$ is bounded,
which implies that $\bar{z}$ belongs to $\overline{Z}$.

Since $G$ is monotone and additively $\alpha$-subhomogeneous from $\D$ to $\R$,
and $\epsilon>0$, we get that
\[ G(v)\geq G(\bar{\phi}^{\bar{z}}) \ge G(v-3\epsilon)\ge
G(v)-3\alpha\epsilon \enspace .\]
Then
\[ G(v)\geq \sup_{\bar{z}\in\overline{Z}} G(\bar{\phi}^{\bar{z}}) \ge
G(v)-3\alpha\epsilon \enspace ,\]
and since this property holds for all $\epsilon>0$, we 
obtain the equality, which shows  the assertion of the theorem.
\end{proof}

Using Theorem~\ref{th-dist}, we get that $v^h(t,\cdot)$ is 
the supremum of concave quadratic maps,
but as in~\cite[Theorem 5.1]{mceneaney2011},
the sets $Z_t$ are infinite for $t<T$.
Here, we shall compute the expression of 
the maps $v^h(t,\cdot)$  by approximating the operators $T_{t,h}^m$ 
as in the same spirit as in~\cite{touzi2011}, that is using 
the simulation of the processes $\hat{X}^{m}$.
The main difference with the method of~\cite{touzi2011}
is that the egression estimations are done on quadratic forms and
not on the value functions directly.
Because of the simulations, 
we should only need to compute the values $v^h(t,\hat{X}^{m}(t))$.
This means that if $N$ is the number of samples of the
Brownian process, then the number of quadratic forms 
$g_t(\cdot,z)$ that are essentials in the computation
of $v^h(t,\hat{X}^{m}(t))$ as a supremum of quadratic forms
 is less or equal to $N\times M$, where $M$ is the cardinality of $\M$.
Then, for any random quadratic form 
which is optimal for a particular $\hat{X}^{m}(t)$,
we need to compute its image by $T_{t,h}^m$.
In~\cite{mceneaney2011}, this image is obtained by hand. Here, we shall
rather use a regression estimation.
The result of Lemma~\ref{lem-imagequad} implies that taking 
for the linear regression space, the space of quadratic forms 
gives an exact result at least when the sample size is large
(to ensure that the solution of the estimation problem is unique).

With all these properties in mind, we construct the following algorithm.
Let us denote by $S_{t,h}^m(x,w)$ the following operator
which sends  $(\hat{X}^{m}(t),W_{t+h} -W_t)$ into
$\hat{X}^{m}(t+h)$:
\[S_{t,h}^m(x,w)= x+ \underline{f}^m(x)h+\underline{\sigma}^m(x) w\enspace .\]
Since $\underline{\sigma}^m$ is constant and
$\underline{f}^m$ is affine, the map $S_{t,h}^m(\cdot,w)$ is affine for all 
$w\in W$.

\noindent
\hrulefill \
\begin{algorithm}\ 

\noindent
\emph{Input:} A constant $\eps$ giving the precision,
and a $5$-uple $N=(\Nzero,\NXW,\NX,\NW,\Nm)$ of integers giving the numbers of samples
and the ``method of sampling'' $\Nm\in\{1,\ldots, 5\}$ described below.
A finite subset $Z_T$ of $\Q_d$ such that
$|\psi(x)-\max_{z\in Z_T} q(x,z)|\leq \eps$, for all $x\in\R^d$,
and $\card{Z_T}\leq M\times \Nzero$,
 and the operators $T_{t,h}^m$ and $G_{t,x,h}^m$  as in Theorem~\ref{th-dist}.

\noindent
\emph{Output:}
The subsets $Z_t$ of $\Q_d$, for $t\in\T_h\cup\{T\}$, 
and the approximate value function
$v^{h,N}:(\T_h\cup\{T\})\times \R^d\to \R$.

\noindent
$\bullet$ \emph{Initialization:}
Let  $\hat{X}^{m}(0)$ be random and independent of the Brownian process.
Consider a sample of $(\hat{X}^{m}(0),(W_{t+h}-W_t)_{t\in \T_h})$
of size $\Nzero$ indexed by $\omega\in \Omega_{\Nzero}:=\{1,\ldots, {\Nzero}\}$,
and denote, for each $t\in\T_h\cup\{T\}$ and
$\omega \in \Omega_{\Nzero}$, $\hat{X}^{m}(t,\omega)$ the value
of  $\hat{X}^{m}(t)$ induced by this sample and satisfying~\eqref{xhat}.
Define $v^{h,N}(T,x)=\max_{z\in Z_T} q(x,z)$,
for $x\in \R^d$, with $q$ as in~\eqref{paramquad}.

\noindent
$\bullet$ For $t=T-h,T-2h,\ldots, 0$ apply the following 3 steps:

(1) For each $\omega\in\Omega_{\Nzero}$ and $m\in \M$,
construct a sample $(\omega_1,\omega'_1),\ldots, (\omega_{\NXW},\omega'_{\NXW})$
of elements of $\Omega_{\Nzero}\times \Omega_{\Nzero}$,
using the method $\Nm$ and possibly
the constants $\NX$ and $\NW$.
Induce the sample $\hat{X}^{m}(t,\omega_i)$ (resp.\ $(W_{t+h}-W_t)(\omega'_i)$)
for $i\in \Omega_{\NXW}$
of $\hat{X}^{m}(t)$ (resp.\ $W_{t+h}-W_t$).
Denote by $\W^N_t\subset \W$
the set of $(W_{t+h}-W_t)(\omega'_i)$ for $i \in \Omega_{\NXW}$.


(2) For each $\omega\in\Omega_{\Nzero}$ and $m\in \M$, 
construct $z_t\in \Q_d$ depending on  $\omega$ and  $m$ as follows:

Let
$\bar{z}_{t+h}:\W^N_t\to Z_{t+h}\subset \Q_d$ be such that,
for all $i\in \Omega_{\NXW}$ we have
\begin{align*}
& v^{h,N}(t+h,S^m_{t,h}(\hat{X}^{m}(t,\omega),(W_{t+h} -W_t)(\omega'_i)))\\
&\; = q\big(S^m_{t,h}(\hat{X}^{m}(t,\omega),(W_{t+h} -W_t)(\omega'_i))
,\bar{z}_{t+h}((W_{t+h} -W_t)(\omega'_i))\big)\enspace .\end{align*}
Extend $\bar{z}_{t+h}$ as a measurable map on $\W$.
Let $\tilde{q}^{m, \bar{z}}_{t,x,h}$ be as in Lemma~\ref{lem-imagequad},
that is be the map $\W\to\R,\; W \mapsto q(x + \underline{f}^m(x)h+
\underline{\sigma}^m(x) W, \bar{z}(W) )$.
Compute an approximation of 
$x \mapsto G_{t,x,h}^m (\tilde{q}^{m, \bar{z}}_{t,x,h})$ by a regression
estimation on the set of quadratic forms using the sample
$(\hat{X}^{m}(t,\omega_i), (W_{t+h} -W_t)(\omega'_i))$, with $i\in\Omega_{\NXW}$.
We obtain $z_t\in \Q_d$ such that
$q(x,z_t)\simeq G_{t,x,h}^m (\tilde{q}^{m, \bar{z}}_{t,x,h})$.

(3) Denote by $Z_t$ the set of all the $z_t\in \Q_d$ obtained in this way,
and define
\[ v^{h,N}(t,x)=\max_{z\in Z_t} q(x,z)\quad
\forall x\in \R^d \enspace .\]

\end{algorithm}
\hrulefill \

Let us precise now the different choices of the ``method of sampling'' 
$\Nm$  used in the algorithm:
\catcode`\@=11
\def\refcounter#1{\protected@edef\@currentlabel
       {\csname the#1\endcsname}%
}
\catcode`\@=12
\newcounter{method}
\def\themethod{{\rm Method \arabic{method}:}}
\def\myitem{\refstepcounter{method}\item[\themethod]}

\begin{itemize}[\setlabelwidth{Method 1:}]
\myitem Assume $\NXW=\Nzero$ and take $\omega_i=\omega'_i=i$ for
$i \in \Omega_{\NXW}$, which means that we take the initial sampling.
\myitem Assume $\NXW=\NX\times \NW$, and choose
once for all $\omega\in \Omega_{\Nzero}$ and $m\in \M$ in the algorithm:
a random sampling $\omega_{i,1},\; i=1,\ldots, \NX$
among the elements of $\Omega_{\Nzero}$
and independently a random sampling  $\omega'_{1,j}\; j=1,\ldots, \NW$
among the elements of $\Omega_{\Nzero}$,
then take the product of samplings, leading to $(\omega_{i,1},\omega'_{1,j})$
for $i=1,\ldots, \NW$ and $j=1,\ldots, \NW$. Reindexing the sampling,
we obtain $(\omega_i,\omega'_i)$ for $i=1,\ldots, \NXW$.
\myitem Do as in Method 2, but choose different samplings
for each   $\omega\in \Omega_{\Nzero}$ and $m\in \M$ in the algorithm,
independently.
\myitem Assume $\NXW=\NX\times \NW$ and $\NW=\Nzero$ and do as in Method 2, but 
take the fixed sampling $\omega'_{1,j}=j$ instead of random sampling.
\myitem Assume $\NXW=\Nzero^2$ and do as in Method 2, but 
take the fixed samplings $\omega_{i,1}=i$ and
$\omega'_{1,j}=j$ instead of random samplings.
\end{itemize}

It is easy to see that the sets 
$Z_t$ of the above algorithm  satisfy
$\card{Z_t}\leq M \times \Nzero$ for all $t\in\T_h$.
Then, the number of computations at each time step for the optimization 
(computation of the $\bar{z}_{t+h}$)
 will be at most in the order of
$(M\times \Nzero)^2\times \NXW$  and at most in the order of
$(M\times \Nzero)^2\times \NW$ when using methods 2,3,4.
Moreover, the number of computations at each time step for the
regression estimation will be at most in the order of 
$\M\times \Nzero \times \NXW$ so will be negligeable with respect to
the optimization step.

Note that in the above algorithm, the regression estimation depends 
 only on the value of $\bar{z}_{t+h}$ on the simulations 
$(W_{t+h} -W_t)(\omega_i))$, with $i\in\Omega_{\NXW}$. That is 
the extension of $\bar{z}_{t+h}$ to a measurable function on $\W$
was only needed for the definition of $\tilde{q}^{m, \bar{z}}_{t,x,h}$
and $x \mapsto G_{t,x,h}^m (\tilde{q}^{m, \bar{z}}_{t,x,h})$.
From Lemma~\ref{lem-imagequad},
$p : x \mapsto G_{t,x,h}^m (\tilde{q}^{m, \bar{z}}_{t,x,h})$ is a quadratic form. 
Hence, when $\NXW$ is large, the regression estimation of $p$
as a quadratic form using the given sample is a good approximation.
Under these conditions, we have
$v^{h,N}(t,x)= \sup_{m\in \M} v^{h,N,m}(t,x)$ for all $x$,
with $v^{h,N,m}(t,x)\simeq T_{t,h}^m( v^{h,N}(t+h,\cdot))(x)$ holding
for all $x=\hat{X}^{m}(t,\omega)$ with $\omega\in\Omega_{\Nzero}$. 
This implies that, when the regression approximations converge, 
$v^{h,N}$ is a good approximation of $v^h$.
Then, under the assumptions of Theorem~\ref{th-dist},
one may expect a convergence result comparable to
the one of~\cite{touzi2011}, 
showing the existence of some 5-uples $N_h$ such that
$v^{h,N_h}$ converges towards the value 
function of the control problem when $h$ goes to $0$.
We present numerical tests to confirm this convergence in the next section,
at least in the case of some of the sampling methods proposed above.
The precise convergence study is left for further work.

\section{Numerical tests}

\addtolength{\textheight}{-1.5cm}   

To test our algorithm, 
we consider the problem of pricing and hedging an option 
with uncertain volatility and two underlying processes, 
studied as an example in Section 3.2 of~\cite{kharroubi2014}.
There, the method proposed is based on a regression on 
a process involving not only the state but also the (discrete) control.

With the notations of the introduction, we consider the case where
$d=2$, $\M = \{\rho_{\min}, \rho_{\max}\}$ with $ -1 \le \rho_{min}, \rho_{max} \le 1$ ,   and there is no continuum control, so $u$ is ommited.
The dynamics of the processes are given,
for all  $m \in \M$, by 
$f^{m}=0$, and for $\xi=(\xi_1,\xi_2)\in \R^2$, 
\[ \sigma^{m}(\xi) = \left[ \begin{array}{cc}
\sigma_1 \xi_1 & 0 \\
\sigma_2 m \xi_2 & \sigma_2 \sqrt{1-m^2} \xi_2 \\ \end{array}\right] \]
with $\sigma_1, \sigma_2 > 0$.
The parameters of the reward satisfy $\delta^m=0$, $l^m=0$, 
and, for $\xi=(\xi_1,\xi_2)\in \R^2$, 
\[\psi(\xi)=(\xi_1-\xi_2-K_1)^+-(\xi_1-\xi_2-K_2)^+\]
with $x^+ = \max(x,0)$, $K_1 < K_2$.

The two coordinates of the controlled process stay in $\R_+$,
the set of positive reals.
To be in the conditions of Theorem~\ref{th-dist}, we approximate the function $\psi$ with a supremum of a finite number of concave quadratic forms on a large subset of $\R_+^2$, typically on the set of $\xi$ such that $\xi_1-\xi_2\in [-100,100]$.
Note that since the second derivative of $\psi$ is $-\infty$ in some points,
it is not  $c$-semiconvex for any $c>0$ and bounded domain, so
the approximation need to use some quadratic forms with a large negative 
curvature, and so the algorithm proposed in~\cite{mceneaney2011} may not work.
The maps $\sigma^m$ for $m \in  \M$ are not constant but they are linear, so one can show that the result of Theorem~\ref{th-dist} still holds.

We take the same constants as in~\cite{kharroubi2014}:
$\sigma_1=0.4,\; \sigma_2=0.3, \; K_1=-5,\; K_2=5,\; T=0.25$,
$\rho_{\min}=-0.8, \rho_{\max}=0.8$.
We fix the time discretization step to $h=0.1$.

We first tested our algorithm in the case where $\M$ is the singleton
$\{\rho_{\min}\}$ or $\{\rho_{\max}\}$, which means that there is no action
on the process, so that the true value function can be computed 
analytically, and compared with the solution obtained by our algorithm.
The method $\Nm=1$ gives very bad results even at time $T-h$.
The method $\Nm=5$ need too much space and time even for $\Nzero=1000$.
In Table~\ref{table1}, we present  for different
values of $N=(\Nzero,\NXW,\NX,\NW,\Nm)$, with $\Nm=2,3,4$,
the norm of the error
on the value function at time $t=0$ and states $\xi_2=50$ 
and $\xi_1\in [20,80]$. 
We see that the best method is the second one,
and that Method 3 gives very bad results.
This may be explained by the introduction of 
a biais due to the maximization of independent random variables.
Note also that the errors for Method 2 are comparable
to the standard deviations obtained
in~\cite{gobet} by Gobet, Lemor and Warin in the case of 
similar option problems with a usual regression estimation of
the value function.

\begin{table}[htb]\label{table1}
\begin{tabular}{|c|c|c|c|c|c|c|c|}\hline
$\rho$&$\Nzero$ &$\NXW$ &$\NX$ &$\NW$ &$\Nm$ & $e_\infty$ & $e_1$ \\ \hline
-0.8&1000&10000&10&1000&2& 0.521 &   0.173   \\ \hline
0.8&1000&10000&10&1000&2& 0.157 & 0.074 \\ \hline
-0.8&1000&1000&10&100&2&0.75 & 0.41  \\ \hline
0.8&1000&1000&10&100&2&0.36 & 0.11\\ \hline
-0.8&1000&1000&10&100&3&3.48 & 1.92  \\ \hline
0.8&1000&1000&10&100&3&3.05 & 0.81\\ \hline
-0.8&100&1000&10&100&2& 1.95 &0.46 \\ \hline
0.8&100&1000&10&100&2& 1.81& 0.33 \\ \hline
-0.8&100&10000&10&1000&2& 2.09 & 0.53 \\ \hline
0.8&100&10000&10&1000&2& 1.79 & 0.36 \\ \hline
-0.8&100&1000&10&100&4& 2.15& 0.55\\ \hline
0.8&100&1000&10&100&4& 1.80&  0.39\\ \hline
\end{tabular}
\caption{Sup-norm and normalized $\ell^1$ norm of the error,
on the value function with constant $\rho$,
at time $t=0$, and states $\xi_2=50$ and $\xi_1\in [20,80]$,
denoted $e_\infty$ and  $e_1$ resp.}
\end{table}

In view of these results, we present in Figure~\ref{fig1000} the result obtained for
the control problem tested in~\cite{kharroubi2014}, that is
with $\M = \{\rho_{\min}, \rho_{\max}\}$,
and $\Nzero=1000$, $\NXW=\NX\times \NW$, $\NX=10$, $\NW=1000$
and $\Nm=2$.
The result is very similar to the one presented in~\cite{kharroubi2014}.

\figperso{\includegraphics[scale=0.35]{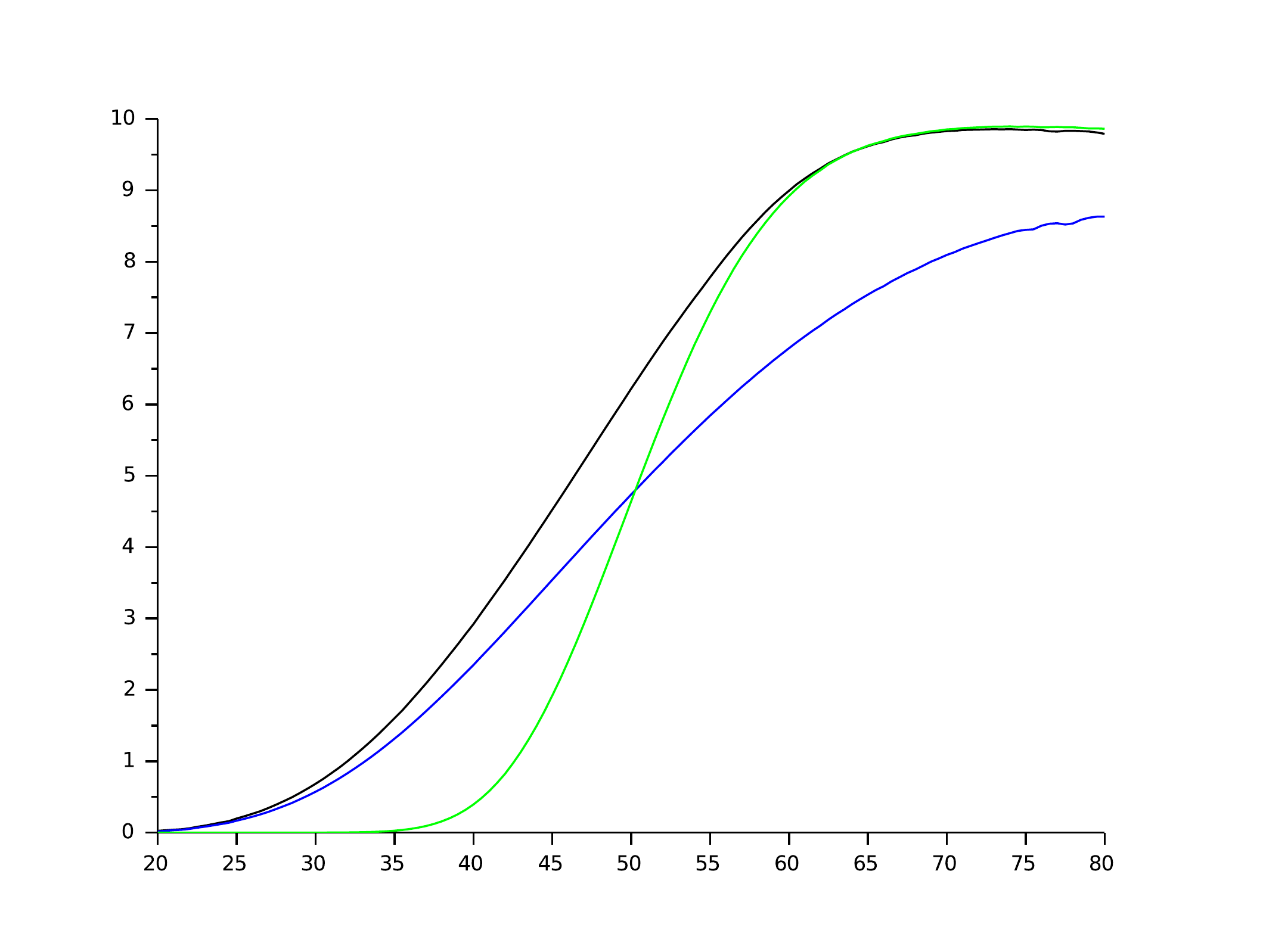} 
}{\caption{
Value function obtained at $t=0$, and $\xi_2=50$ as a function 
of $\xi_1\in [20,80]$.
Here $\Nzero=1000$, $\NXW=\NX\times \NW$, $\NX=10$, $\NW=1000$
and $\Nm=2$.
In blue, $\rho$ is constant equal to $-0.8$, in green $\rho$ is constant
equal to $0.8$, and in black $\rho\in \{-0.8,0.8\}$.\label{fig1000}}}

\begin{figure}

\end{figure}

\addtolength{\textheight}{-3cm}   


\bibliography{maxproba}
\bibliographystyle{IEEEtran}

\end{document}